\newtheorem{theorem}{Theorem}[section]
\newtheorem{lemma}[theorem]{Lemma}
\newtheorem{definition}[theorem]{Definition}
\newtheorem{proposition}[theorem]{Proposition}
\newtheorem{remark}[theorem]{Remark}
\numberwithin{equation}{section}
\begin{document}

\newcommand{\abs}[1]{\left\vert #1 \right\vert}

\newcommand{\set}[1]{\left\{ #1 \right\}}

\newcommand{\seq}[1]{\langle #1 \rangle}

\allowdisplaybreaks

\renewcommand{\thefootnote}{$\star$}

\title{On some Algebraic Properties for $q$-Meixner Multiple Orthogonal Polynomials of the First Kind}

\author{Jorge Arves\'u\thanks{The research of J. Arves\'u was partially supported by the
research grant MTM2012-36732-C03-01 of the Ministerio de Educaci\'on y Ciencia
 of Spain and mobility research grant from Fundaci\'on Caja Madrid.}\, and A.M. Ram\'{\i}rez-Aberasturis
\\Department of Mathematics, Universidad Carlos III de Madrid,\\
Avenida de la Universidad, 30, 28911, Legan\'es, Spain}
\date{\today }
\maketitle

\abstract{
We study a~new family of $q$-Meixner multiple orthogonal polynomials of the first kind. The discrete orthogonality conditions are considered over a non-uniform lattice with respect to 
different $q$-analogues of Pascal distributions. We address some algebraic properties, namely raising and lowering operators as well as Rodrigues-type.
Based on the explicit expressions for the raising and lowering operator a~high-order linear $q$-dif\/ference equation with polynomial coef\/f\/icients for the $q$-Meixner multiple orthogonal polynomials of the first kind is obtained. Finally, we obtain the nearest neighbor recurrence relation based on a purely algebraic approach.
}\\[2mm]
{\bf Keywords:} multiple orthogonal polynomials; Hermite--Pad\'e approximation; dif\/ference equations; classical orthogonal polynomials of a~discrete variable; Meixner polynomials; $q$-poly\-no\-mials
\\[2mm]
\noindent{\bf 2010 Mathematics Subject Classification:} 42C05; 33E30; 33C47; 33C65

\renewcommand{\thefootnote}{\arabic{footnote}} \setcounter{footnote}{0}

\section{Introduction}

We begin by introducing the basic background materials. Let $\vec{\mu}=(\mu_{1},\ldots,\mu_{r})$ be a~vector of~$r$ positive discrete measures (with finite moments)
\begin{gather*}
\mu_{i}=\sum\limits_{k=0}^{N_{i}}\omega_{i,k}\delta_{x_{i,k}},
\qquad
\omega_{i,k}>0,
\qquad
x_{i,k}\in \mathbb{R},
\qquad
N_{i}\in \mathbb{N\cup}\{+\infty \},
\qquad
i=1,2,\ldots,r,
\label{dismeasure}
\end{gather*}
where $\delta_{x_{i,k}}$ denotes the Dirac delta function and $x_{i_{1},k}\neq x_{i_{2},k}$, $k=0,\ldots,N_{i}$,
whenever $i_{1}\neq i_{2}$. By 
$\vec{n}=(n_{1},\ldots,n_{r})\in\mathbb{N}^{r}$ we denote a~multi-index, where
$\mathbb{N}$ stands for the set of all nonnegative integers.
A~type~II discrete multiple orthogonal polynomial $P_{\vec{n}}(x)$, corresponding to the multi-index $\vec{n}$, is a~polynomial of
degree $\leq\vert \vec{n}\vert =n_{1}+\dots +n_{r}$ which satisf\/ies the orthogonality
conditions~\cite{arvesu_vanAssche}
\begin{gather}
\sum\limits_{k=0}^{N_{i}}P_{\vec{n}}(x_{i,k}) x_{i,k}^{j}\omega_{i,k} =0,
\qquad
j=0,\ldots,n_{i}-1,
\qquad
i=1,\ldots,r.
\label{OrthC}
\end{gather}
The orthogonality conditions~\eqref{OrthC} give a~linear system of $\vert\vec{n}\vert$
homogeneous equations for the $\vert\vec{n}\vert +1$ unknown coef\/f\/icients of $P_{\vec{n}}(x)$.
This polynomial solution $P_{\vec{n}}$ always exists.
We restrict our attention to a~unique solution (up to a~multiplicative factor) with $\deg P_{\vec{n}}(x)=|\vec{n}|$.
If this happen for every multi-index $\vec{n}$, we say that $\vec{n}$ is normal~\cite{Nikishin}.
If the above system of measures forms an $AT$ system~\cite{arvesu_vanAssche,Nikishin} then every multi-index is normal.
In this paper we will deal with such system of discrete measures.

In~\cite{arvesu_vanAssche} some type II discrete multiple orthogonal polynomials on the linear lattice
$x(s)=s$ were considered. Moreover, multiple Meixner polynomials (of first and second kind, respectively) were studied. Indeed, the monic multiple Meixner polynomials of the first kind~\cite{arvesu_vanAssche} $M_{\vec{n}}^{\vec{\alpha},\beta}(x)$, with multi-index
$\vec{n}\in \mathbb{N}^{r}$ and deg\-ree~$\vert \vec{n}\vert $, satisfy the following orthogonality conditions with dif\/ferent positive parameters $\alpha_{1},\ldots,\alpha_{r}\in(0,1)$ (indexed~by
$\vec{\alpha}=(\alpha_{1},\ldots,\alpha_{r})$) and $\beta>0$
\begin{gather*}
\sum\limits_{x=0}^{\infty}M_{\vec{n}}^{\vec{\alpha},\beta}(x)(-x)_{j}
\upsilon^{\alpha_{i},\beta}(x)=0,
\qquad
j=0,\ldots,n_{i}-1,
\qquad
i=1,\ldots,r,
\end{gather*}
where
\begin{gather*}
\upsilon^{\alpha_{i},\beta}(x)=
\begin{cases}
\dfrac{\Gamma (\beta+x)}{\Gamma (\beta)}\dfrac{\alpha_{i}^{x}}{\Gamma (x+1)}, & \text{if} \quad  x\in\mathbb{R}\setminus\left(\mathbb{Z}_{-}\cup\{-\beta,-\beta-1,\beta-2,\dots\}\right),
\\
0, & \text{otherwise,}
\end{cases}
\end{gather*}
and $(x)_j=(x)(x+1)\cdots(x+j-1)$, $(x)_0=1$, $j\geq 1$, denotes
the Pochhammer symbol. Notice that the multi-index $\vec{n}\in\mathbb{N}^{r}$ is normal whenever
$0<\alpha_{i}<1$, $i=1,2,\ldots,r$, and with all the $\alpha_{i}$ dif\/ferent (see \cite{arvesu_vanAssche}). 

Furthermore, it was found the following raising operators
\begin{gather}
\mathcal{R}_{\vec{n}}^{\alpha_{i},\beta}\big[M_{\vec{n}}^{\vec{\alpha},\beta}(x)\big ]
=-M_{\vec{n}+\vec{e}_{i}}^{\vec{\alpha},\beta-1}(x),
\qquad
i=1,\ldots,r,
\label{raisingx}
\end{gather}
where
\begin{gather*}
\mathcal{R}_{\vec{n}}^{\alpha_{i},\beta}
=\frac{\alpha_{i}\left(\beta-1\right)}{\left(1-\alpha_{i}\right)\upsilon^{\alpha_{i},\beta-1}(x)}\bigtriangledown\upsilon^{\alpha_{i},\beta}(x),
\end{gather*}
and $\bigtriangledown f(x)=f(x)-f(x-1)$ denotes the backward dif\/ference operator.
As a~consequence of~\eqref{raisingx} the following Rodrigues-type formula
\begin{gather}
M_{\vec{n}}^{\vec{\alpha},\beta}(x)
=(\beta)_{\vert\vec{n}\vert}\prod\limits_{j=1}^{r}\left(\frac{\alpha_{j}}{\alpha_{j}-1}\right)^{n_{j}}
 \frac{\Gamma(\beta)\Gamma(x+1)}{\Gamma(\beta+x)}\mathcal{M}_{\vec{n}}^{\vec{\alpha}}\left(\frac{\Gamma(\beta+\vert\vec{n}\vert +x)}{\Gamma(\beta+\vert\vec{n}\vert)\Gamma(x+1)}\right),
\label{Rodrigues-multi}
\end{gather}
where
\begin{gather*}
\mathcal{M}_{\vec{n}}^{\vec{\alpha}}=\prod\limits_{i=1}^{r}
\big(\alpha_{i}^{-x}\bigtriangledown^{n_{i}}\alpha_{i}^{x}\big),
\end{gather*}
was obtained.

Moreover, two important algebraic properties were deduced for multiple Meixner polynomials of the first kind~\cite{arvesu_vanAssche}, namely the
$(r+1)$-order linear dif\/ference equation~\cite{lee}
\begin{gather}
 \prod\limits_{i=1}^{r}\mathcal{R}_{\vec{n}}^{\alpha_{i},\beta}
\bigtriangleup M_{\vec{n}}^{\vec{\alpha},\beta}(x) 
+\sum\limits_{i=1}^{r}n_{i}\prod\limits_{\substack{j=1
\\
j\neq i}}^{r} \mathcal{R}_{\vec{n}}^{\alpha_{j},\beta}
M_{\vec{n}}^{\vec{\alpha},\beta}(x) =0,
\label{opdi-1}
\end{gather}
where $\bigtriangleup f(x)=f(x+1)-f(x)$,
and the recurrence relation~\cite{arvesu_vanAssche,Hane}
\begin{align}
xM_{\vec{n}}^{\vec{\alpha},\beta}(x) &=M_{\vec{n}+\vec{e}_{k} }^{\vec{\alpha},\beta}(x) +\left( (\beta+\vert\vec{n}\vert )\left(\frac{\alpha_{k}}{1-\alpha_{k}}\right)+\sum\limits_{i=1}^{r}\frac{n_{i}}{1-\alpha_{i}} \right) M_{\vec{n}}^{\vec{\alpha}\beta}(x)\notag\\
&+\sum\limits_{i=1}^{r}\frac{\alpha_{i}n_{i}\left(\beta +\vert\vec{n}\vert -1 \right)}{\left(\alpha_{i}-1\right)^{2}}M_{\vec{n}-\vec{e}_{i}}^{\vec{\alpha},\beta}(x),
\label{AR}
\end{align}
where the multi-index $\vec{e}_{i}$ is the standard~$r$ dimensional unit vector with the~$i$-th entry equals $1$ and $0$
otherwise.

The multiple Meixner polynomials of the first kind $M_{\vec{n},\beta}^{\vec{\alpha}}(x)$ are common eigenfunctions of the above
two linear dif\/ference operators of order $(r+1)$, given by~\eqref{opdi-1} and~\eqref{AR}, respectively.

In this paper we will introduce a $q$-analogue of such multiple orthogonal polynomials, i.e. when the component measures of $\vec{\mu}$ are dif\/ferent $q$-Poisson distributions and study the aforementioned algebraic properties. Our goal is to continue the recent investigations in ~\cite{arvesu-qHahn,arvesu-ramirez,Postelmans_Assche} for some families of $q$-multiple orthogonal polynomials regarding their algebraic properties. In~\cite{arvesu-esposito,lee,Assche_diff-eq} an $(r+1)$-order dif\/ference equation for some discrete multiple
orthogonal polynomials was obtained. Furthermore, the explicit expressions for the coef\/f\/icients of  $(r+2)$-term recurrence relations are a very important issue for the study of some type of asymptotic behaviors for discrete multiple orthogonal polynomials. In~\cite{apt-arv} the weak asymptotics was studied for multiple Meixner polynomials of the f\/irst and
second kind, respectively. The zero distribution of multiple Meixner polynomials was also studied. Another interesting fact involving the knowledge of the $(r+2)$-term recurrence relations is the attainment of a~Christof\/fel--Darboux kernel~\cite{daems_kuijlaars_kernel} among other applications, which plays important role in correlation kernel as in the unitary random matrix model with external source.

The content of this paper is as follows. In Section~\ref{q-meix} we def\/ine the $q$-Meixner multiple orthogonal polynomials  of the first kind. Moreover, we will prove that these multiple orthogonal polynomials can be explicitly expressed by  means of Rodrigues-type formula. This fact provides the background materials for the next Section~\ref{df-eq}, in which we obtain the  $(r+1)$-order $q$-dif\/ference equation, with polynomial coefficients on a~non-uniform lattice $x(s)$. Finally, in Section~\ref{rr} the nearest neighbor recurrence relation is obtained (an $(r+2)$-term recurrence relations). Explicit expressions for the recurrence coef\/f\/icients are given. The paper ends by summarising our findings in Section \ref{conclu}.

\section[$q$-Meixner multiple orthogonal polynomials of the first kind]{$\boldsymbol{q}$-Meixner multiple orthogonal polynomials of the first kind}\label{q-meix}

Aimed to define a new family of  $q$-multiple orthogonal polynomials~\cite{arvesu-qHahn} let us consider the following~$r$ positive discrete measures on $\mathbb{R}^{+}$,
\begin{gather}
\mu_{i}=\sum\limits_{s=0}^{\infty}\omega_{i}(k)\delta(k-s),
\qquad
\omega_{i}>0,
\qquad
i=1,2,\ldots,r.
\label{Measu}
\end{gather}
Here $\omega_{i}(s)=\upsilon_{q}^{\alpha_{i},\beta}(s) \bigtriangleup x(s-1/2)$, $x(s)=(q^{s}-1)/(q-1)$, and 
\begin{gather*}
\upsilon_{q}^{\alpha_{i},\beta}(s) =
\begin{cases}
\dfrac{\alpha_{i}^{s}}{\Gamma_{q}(s+1)}\dfrac{\Gamma_{q}(\beta + s)}{\Gamma_{q}(\beta)}, & \text{if}\quad s\in \mathbb{R}^{+}\cup \{0\},
\\
0, &\text{otherwise},
\end{cases}
\end{gather*}
where $0<\alpha_{i}<1$, $\beta>0$, $i=1,2,\ldots,r$, and with all the $\alpha_{i}$ dif\/ferent.
Recall that the $q$-Gamma function is given~by
\begin{gather*}
\Gamma_q(s) =
\begin{cases}
f(s;q)=(1-q)^{1-s} \dfrac{\prod\limits_{k\geq0} (1-q^{k+1})}{\prod\limits_{k\geq0} (1-q^{s+k})}, & 0<q<1,
\\
q^{\frac{(s-1)(s-2)}{2}}f\big(s;q^{-1}\big), & q>1.
\end{cases}
\label{q-gamma-clas}
\end{gather*}
See also~\cite{Gasper,nsu} for the above def\/inition of the $q$-Gamma function.

\begin{lemma}~\cite{arvesu-ramirez}
\label{le-1}
The system of functions
\begin{gather*}
 \alpha_1^s, x(s)\alpha_1^s,\ldots,x(s)^{n_1-1}\alpha_1^s,\ldots, \alpha_r^s,
x(s)\alpha_r^s,\ldots,x(s)^{n_r-1}\alpha_r^s,
\label{cheb}
\end{gather*}
with $\alpha_{i}>0$, $i=1,2,\ldots,r$, and $(\alpha_i/\alpha_j)\neq q^k$,
$k\in\mathbb{Z}$,
$i,j=1,\dots,r$,
$i\neq j$, forms a~Chebyshev system on $\mathbb{R}^{+}$ for every
$\vec{n}=(n_1,\dots,n_r)\in\mathbb{N}^{r}$.
\end{lemma}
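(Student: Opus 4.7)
The plan is to linearize the system via the strictly monotone change of variable $y=q^{s}$, which turns the $q$-lattice into a standard multiplicative scale on a subinterval of $(0,\infty)$, and then invoke the classical Chebyshev property of real powers. A generic non-trivial element of the span has the form $F(s)=\sum_{i=1}^{r}p_i(x(s))\,\alpha_i^{s}$ with $\deg p_i\leq n_i-1$ and not all $p_i$ identically zero, and I need to show $F$ has at most $|\vec{n}|-1$ zeros in $\mathbb{R}^{+}$.

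Assume first $0<q<1$. The map $y=q^{s}$ is a smooth bijection of $\mathbb{R}^{+}$ onto $(0,1)$ that transports zeros one-to-one, so it suffices to bound the zeros of $\tilde F(y):=F(s(y))$ in $(0,1)$. Since $\alpha_i^{s}=y^{\mu_i}$ with $\mu_i=\ln\alpha_i/\ln q$ and $x(s)=(1-y)/(1-q)$ is linear in $y$, each $p_i(x(s))$ becomes a polynomial in $y$ of degree at most $n_i-1$, and after expanding,
\[
\tilde F(y)=\sum_{i=1}^{r}\sum_{k=0}^{n_i-1}c_{i,k}\,y^{\mu_i+k}
\]
is a linear combination of exactly $|\vec{n}|$ real powers of $y$. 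The hypothesis $\alpha_i/\alpha_j\neq q^{m}$ for every $m\in\mathbb{Z}$ translates to $\mu_i-\mu_j\notin\mathbb{Z}$ for $i\neq j$, so the exponents $\{\mu_i+k:1\leq i\leq r,\ 0\leq k\leq n_i-1\}$ are pairwise distinct and the $c_{i,k}$ cannot all vanish unless every $p_i$ does.

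The problem is thereby reduced to the classical assertion that, for pairwise distinct real exponents $\nu_1,\ldots,\nu_N$, the functions $y^{\nu_1},\ldots,y^{\nu_N}$ form a Chebyshev system on $(0,\infty)$. I would prove this by induction on $N$: assuming $\nu_1$ is the smallest exponent, dividing through by $y^{\nu_1}>0$ preserves the zero count on $(0,\infty)$; Rolle's theorem applied to the resulting smooth function together with termwise differentiation then produces a non-trivial combination of $N-1$ distinct real powers with at least $N-1$ zeros, contradicting the inductive hypothesis. The case $q>1$ is dispatched identically using the bijection $y=q^{s}\in(1,\infty)$, or by invoking the identity $\Gamma_q(s)=q^{(s-1)(s-2)/2}f(s;q^{-1})$ to reduce to the previous case up to a strictly positive $s$-dependent factor that can neither create nor destroy zeros.

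The main obstacle is the careful accounting of exponents: the essential content of the hypothesis $\alpha_i/\alpha_j\neq q^{m}$ is that the $r$ arithmetic progressions $\{\mu_i,\mu_i+1,\ldots,\mu_i+n_i-1\}$ are pairwise disjoint, so that $\tilde F$ is genuinely a linear combination of $|\vec{n}|$ distinct real powers of $y$. Once this disjointness is granted, the classical Chebyshev property of real powers closes the argument, so the proof is conceptually short but the bookkeeping of the shifted exponents must be performed carefully.
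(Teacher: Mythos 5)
The paper states Lemma~\ref{le-1} only by citation to~\cite{arvesu-ramirez} and contains no proof of its own, so the comparison is with the standard argument used there: your reduction via $y=q^{s}$ to a nontrivial linear combination of $|\vec{n}|$ distinct real powers $y^{\mu_i+k}$ (equivalently, of the exponentials $(\alpha_i q^{k})^{s}$ with pairwise distinct positive bases, the distinctness being exactly what $\alpha_i/\alpha_j\neq q^{m}$ encodes), followed by the Rolle-plus-induction proof that a M\"untz system of distinct real powers is Chebyshev on $(0,\infty)$, is precisely that argument and is correct, including the needed observation that the affine substitution $x(s)=(1-y)/(1-q)$ gives an invertible map on the coefficient vectors so nontriviality is preserved. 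The only blemish is the closing aside invoking $\Gamma_q(s)=q^{(s-1)(s-2)/2}f(s;q^{-1})$ for the case $q>1$, which is a non sequitur since the system contains no $q$-Gamma functions; the direct substitution $y=q^{s}\in(1,\infty)$ that you offer first already disposes of that case.
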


As a~consequence of Lemma~\ref{le-1} the system of measures $\mu_1,\mu_2,\ldots,\mu_r$ given in~\eqref{Measu} forms an
AT system on $\mathbb{R}^{+}$. For this system of measures we will define a new family of $q$-multiple orthogonal polynomials.

Recall that the $q$-analogue of the Stirling polynomials denoted by $[s]_{q}^{(k)}$, is
a~polynomial of degree~$k$ in the variable $x(s)=(q^s-1)/(q-1)$, i.e.
\begin{gather*}
[s]_{q}^{(k)}=\prod\limits_{j=0}^{k-1}\frac{q^{s-j}-1}{q-1} =x(s) x(s-1) \cdots x(s-k+1)
\qquad\!
\text{for}
\quad
k>0,
\qquad\!
\text{and}
\qquad\!
[s]_{q}^{(0)}=1.
\end{gather*}
Moreover, $[s]_q^{(k)}=q^{-\binom{k}{2}}x^k(s)+\text{lower terms}=\mathcal{O}(q^{ks})$, where $\mathcal{O}(\cdot)$ stands for the big-O notation.
Observe that, when~$q$ goes to 1, the symbol $[s]_{q}^{(k)}$ converges to $(-1)^{k}(-s)_{k}$, where $(s)_{k}$
is the Pochhammer symbol.

\begin{definition}
A~polynomial $M_{q,\vec{n}}^{\vec{\alpha},\beta}(s)$, with multi-index $\vec{n}\in \mathbb{N}^{r}$ and degree $\vert
\vec{n}\vert$ that verif\/ies the orthogonality conditions
\begin{gather}
\sum\limits_{s=0}^{\infty}M_{q,\vec{n}}^{\vec{\alpha},\beta}(s) [s]_{q}^{(k)}\upsilon_{q}^{\alpha_{i},\beta}(s)
\bigtriangleup x(s-1/2) =0,
\qquad
0\leq k\leq n_{i}-1,
\qquad
i=1,\ldots,r,
\label{NOrthC}
\end{gather}
(with respect to the measures~\eqref{Measu}) is said to be the $q$-Meixner multiple orthogonal polynomial of the first kind.
\end{definition}

In the sequel, we will use the following dif\/ference operators
\begin{gather}
\Delta  \overset{\rm def}{=}\frac{\bigtriangleup} {\bigtriangleup x(s-1/2)},
\label{lower}
\\
\nabla^{n_{i}} =\underbrace{\nabla\cdots\nabla}_{n_{i}~\text{times}},\quad\mbox{where}\quad \nabla \overset{\rm def}{=} \frac{\bigtriangledown}{\bigtriangledown x(s+1/2)},
\label{nbackward}\notag
\end{gather}
and $\bigtriangledown x_{1}(s)\overset{\rm def}{=}\bigtriangledown x(s+1/2)= \bigtriangleup x(s-1/2)=q^{s-1/2}$.  Moreover, we will only consider monic $q$-Meixner multiple orthogonal polynomials of the first kind.

\begin{proposition}
There holds the following $q$-analogue of Rodrigues-type formula
\begin{gather}
M_{q,\vec{n}}^{\vec{\alpha},\beta}(s)
=\mathcal{K}_{q}^{\vec{n},\vec{\alpha},\beta}\frac{\Gamma_{q}(\beta)\Gamma_{q}(s+1)}{\Gamma_{q}(\beta+s)}\mathcal{M}_{q,\vec{n}}^{\vec{\alpha}}\left(\frac{\Gamma_{q}(\beta+\vert \vec{n} \vert+s)}{\Gamma_{q}(\beta+\vert \vec{n} \vert)\Gamma_{q}(s+1)}\right),
\label{RFormula}
\end{gather}
where
\begin{gather}
\mathcal{M}_{q,\vec{n}}^{\vec{\alpha}}=\prod\limits_{i=1}^{r}\mathcal{M}_{q,n_{i}}^{\alpha_{i}},
\qquad
\mathcal{M}_{q,n_{i}}^{\alpha_{i}}=(\alpha_{i})^{-s} \nabla^{n_{i}}(\alpha_{i}q^{n_i})^{s},
\label{Op-q-Meix}
\end{gather}
and
\begin{gather*}
\mathcal{K}_{q}^{\vec{n},\vec{\alpha},\beta}=(-1)^{\vert \vec{n} \vert}[-\beta]_{q}^{\left(\vert \vec{n} \vert\right)}q^{-\frac{\vert \vec{n}
\vert}{2}}\left(\prod\limits_{i=1}^{r}\frac{\alpha_i^{n_i}\prod\limits_{j=1}^{n_i}q^{|\vec{n}|_{i}+\beta+j -1}}{\prod\limits_{j=1}^{n_i}\left(\alpha_{i} q^{|\vec{n}|+\beta+j -1}-1\right)}\right)\left(\prod\limits_{i=1}^{r}q^{n_i\sum\limits_{j=i}^{r}n_{j}}\right),
\end{gather*}
where $\vert \vec{n}\vert_{i}=n_{1}+\cdots +n_{i-1}$, $\vert \vec{n}\vert_{1}=0$.
\end{proposition}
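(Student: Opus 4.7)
The plan is to verify \eqref{RFormula} by direct substitution into the orthogonality conditions \eqref{NOrthC}, using $q$-summation by parts to move the operator $\nabla^{n_i}$ off the weight and onto the test polynomials $[s]_{q}^{(k)}$. First I would show the right-hand side of \eqref{RFormula} is a polynomial of degree $|\vec{n}|$ in $x(s)$: iterated application of $\nabla$ to the $q$-binomial ratio $\Gamma_{q}(\beta+|\vec{n}|+s)/[\Gamma_{q}(\beta+|\vec{n}|)\Gamma_{q}(s+1)]$ yields a similar $q$-hypergeometric ratio whose quotient with $\Gamma_{q}(\beta+s)/\Gamma_{q}(s+1)$ collapses to a polynomial in $x(s)$ of the claimed degree.

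The heart of the proof is the orthogonality check. Substitution of \eqref{RFormula} into \eqref{NOrthC} and cancellation against the weight $\upsilon_{q}^{\alpha_{i},\beta}(s)$ produces, after absorbing the $\alpha_{i}^{s}$ factor into $\mathcal{M}_{q,n_{i}}^{\alpha_{i}}=\alpha_{i}^{-s}\nabla^{n_{i}}(\alpha_{i}q^{n_{i}})^{s}$, a sum of the form
\begin{gather*}
\mathcal{K}_{q}^{\vec{n},\vec{\alpha},\beta}\sum_{s=0}^{\infty}[s]_{q}^{(k)}\,\nabla^{n_{i}}\!\bigl[(\alpha_{i}q^{n_{i}})^{s}\,G(s)\bigr]\bigtriangleup x(s-1/2),
\end{gather*}
where $G(s)$ collects the remaining $\mathcal{M}_{q,n_{j}}^{\alpha_{j}}$ ($j\neq i$) applied to the $q$-hypergeometric ratio $F(s)=\Gamma_{q}(\beta+|\vec{n}|+s)/[\Gamma_{q}(\beta+|\vec{n}|)\Gamma_{q}(s+1)]$. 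Performing Abel summation $n_{i}$ times with respect to $\nabla^{n_{i}}$ transfers the differences onto $[s]_{q}^{(k)}$ with appropriate signs; since $[s]_{q}^{(k)}$ is a polynomial of degree $k<n_{i}$ in $x(s)$, the iterated difference vanishes identically. Boundary terms at $s=0$ drop out because $x(0)=0$ is a factor of $[s]_{q}^{(k)}$ for $k\geq 1$, while at infinity the geometric decay of $(\alpha_{i}q^{n_{i}})^{s}$ (using $0<\alpha_{i}<1$ and $0<q<1$) kills the remaining contribution. Finally, the constant $\mathcal{K}_{q}^{\vec{n},\vec{\alpha},\beta}$ is pinned down by computing the leading coefficient in $x(s)$ of the raw expression, using $[s]_{q}^{(k)}=q^{-\binom{k}{2}}x^{k}(s)+\text{lower order}$ and the iterated $\nabla^{n_{i}}$ action on $(\alpha_{i}q^{n_{i}})^{s}$ times the $q$-Pochhammer ratios, and then inverting the resulting product to enforce monicity.

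The main obstacle I anticipate is the bookkeeping in the summation-by-parts step: one must verify that the operators $\mathcal{M}_{q,n_{j}}^{\alpha_{j}}$ for $j\neq i$ can be applied to $F$ before the $\nabla^{n_{i}}$ without spoiling the decay at infinity required for the Abel argument, and one must carefully track the $q^{s}$ factors generated at each summation by parts (these cumulate into the $q^{n_{i}\sum_{j\geq i}n_{j}}$ piece of $\mathcal{K}_{q}^{\vec{n},\vec{\alpha},\beta}$, together with the $q^{|\vec{n}|_{i}+\beta+j-1}$ factors). A subsidiary technical issue is that $\mathcal{M}_{q,\vec{n}}^{\vec{\alpha}}$ must be independent of the order of its factors, which follows from an elementary pseudo-commutation of $\nabla$ with multiplications by $q^{s}$-type exponentials.
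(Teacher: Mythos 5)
Your proposal is correct in outline but follows a genuinely different route from the paper. The paper does not verify the Rodrigues formula by direct substitution: it first derives the $r$ raising operators $\mathcal{D}_{q}^{\alpha_{i},\beta}$ by a \emph{single} summation by parts applied to the orthogonality conditions \eqref{NOrthC} (after rewriting $[s]_{q}^{(k)}=\frac{q^{(k-1)/2}}{[k+1]_{q}^{(1)}}\nabla[s+1]_{q}^{(k+1)}$), obtaining $\mathcal{D}_{q}^{\alpha_{i},\beta}M_{q,\vec{n}}^{\vec{\alpha},\beta}(s)=-q^{1/2}M_{q,\vec{n}+\vec{e}_{i}}^{\vec{\alpha}_{i,1/q},\beta-1}(s)$, and then builds \eqref{RFormula} by iterating these operators starting from the constant polynomial; the normalization $\mathcal{K}_{q}^{\vec{n},\vec{\alpha},\beta}$ accumulates automatically from the constants generated at each step. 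You instead substitute the closed form into \eqref{NOrthC} and perform all $n_{i}$ summations by parts at once, killing $[s]_{q}^{(k)}$, $k\leq n_{i}-1$, by the iterated difference. Both arguments run on the same engine (summation by parts against the weight) and both need normality of the multi-index to identify the result with the unique monic multiple orthogonal polynomial. The paper's route yields the raising operators as a by-product, which are reused in Section~3 to assemble the $(r+1)$-order difference equation; your route avoids the recursion but must separately establish that the right-hand side of \eqref{RFormula} is a polynomial of degree $|\vec{n}|$ and compute its leading coefficient to pin down $\mathcal{K}_{q}^{\vec{n},\vec{\alpha},\beta}$ --- exactly the two pieces of bookkeeping you flag as the bulk of the work.

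One detail to repair: your justification for the vanishing of the lower boundary terms (``$x(0)=0$ is a factor of $[s]_{q}^{(k)}$ for $k\geq 1$'') does not cover $k=0$, and in any case applies only to the first summation by parts, since afterwards the test function is no longer $[s]_{q}^{(k)}$. The correct reason, which the paper invokes in the form $\upsilon_{q}^{\alpha_{i},\beta}(-1)=0$, is that each lower boundary term has the shape $u(0)\,v(-1)$ with $v(s)=\nabla^{m}\bigl[(\alpha_{i}q^{n_{i}})^{s}G(s)\bigr]$, and every value $G(-1),G(-2),\dots$ entering $v(-1)$ vanishes because of the factor $1/\Gamma_{q}(s+1)$, which is zero at the nonpositive integers. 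With that substitution your argument goes through.
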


\begin{proof} We first start by f\/inding the raising operators. Thus,  we substitute $[s]_{q}^{(k)}$ in~\eqref{NOrthC} by the following f\/inite-dif\/ference
expression
\begin{gather*}
[s]_{q}^{(k)}=\frac{q^{(k-1) /2}}{[k+1]_{q}^{(1)}}\nabla[s+1]_{q}^{(k+1)},
\label{diffs}
\end{gather*}
i.e.,
\begin{gather*}
\sum\limits_{s=0}^{\infty}M_{q,\vec{n}}^{\vec{\alpha},\beta}(s) \nabla
[s+1]_{q}^{(k+1)}\upsilon_{q}^{\alpha_{i}}(s) \bigtriangledown x_{1}(s) =0,
\qquad
0\leq k\leq n_{i}-1,
\qquad
i=1,\ldots,r.
\qquad
\end{gather*}
Then, using summation by parts along with conditions
$\upsilon_{q}^{\alpha_{i},\beta}(-1)=\upsilon_{q}^{\alpha_{i},\beta}(\infty)=0$, we get ~$r$ raising operators
\begin{gather}
\mathcal{D}_{q}^{\alpha_{i},\beta}M_{q,\vec{n}}^{\vec{\alpha},\beta}(s)
=-q^{1/2}M_{q,\vec{n}+\vec{e}_{i}}^{\vec{\alpha}_{i,1/q},\beta-1}(s),
\qquad
i=1,\ldots,r,
\label{ROpqMCharlier}
\end{gather}
where
\begin{gather*}
\mathcal{D}_{q}^{\alpha_{i},\beta}=
\left(\frac{\alpha_{i}x\left(\beta -1\right) q^{|\vec{n}|}}{\left(1-\alpha_{i}q^{|\vec{n}|+\beta -1}\right)\upsilon_{q}^{\alpha_{i}/q ,\beta -1}(s)} \nabla \upsilon_{q}^{\alpha_{i},\beta}(s)\right),\\
\\
\vec{\alpha}_{i,1/q}=(\alpha_{1},\ldots,\alpha_{i}/q,\ldots,\alpha_{r}).
\end{gather*}
Indeed,
\begin{align*}
q^{-\vert \vec{n}\vert -1/2}\mathcal{D}_{q}^{\alpha_{i},\beta}f(s) &=\frac{1}{1-\alpha_{i} q^{|\vec{n}|+\beta -1}}\left[\alpha_{i}q^{\beta -1}\left(x(s)-x(1-\beta)\right)-x(s) \right] f(s)\\
&+\frac{1}{1-\alpha_{i} q^{|\vec{n}|+\beta -1}}x(s)
\bigtriangledown f(s),
\end{align*}
for any function $f(s)$ def\/ined on the discrete variable~$s$. Observe that  $\mathcal{D}_{q}^{\alpha_{i},\beta}$ raises  by 1 the~$i$-th component of the multi-index
$\vec{n}$ in~\eqref{ROpqMCharlier}.

Finally, using the raising operators~\eqref{ROpqMCharlier} in a~recursive way one obtains the
Rodrigues-type formula~\eqref{RFormula}.
\end{proof}

\begin{remark}
Since we are dealing with an AT-system of positive discrete measures \eqref{Measu}, then the $q$-Meixner multiple orthogonal polynomial of the first kind
$M_{q,\vec{n}}^{\vec{\alpha},\beta}(s)$ has exactly $\vert \vec{n}\vert$ dif\/ferent zeros on $\mathbb{R}^{+}$
(see~\cite[Theorem 2.1, pp.~26--27]{arvesu_vanAssche}).
\end{remark}

\section[High-order $q$-dif\/ference equation]{High-order $\boldsymbol{q}$-dif\/ference equation}\label{df-eq}

The strategy that we will follow to deduce the $(r+1)$-order $q$-dif\/ference equation is the foliowing.

{\it First step}. Define an $r$-dimensional subspace $\mathbb{V}$ of polynomials on the variable $x(s)$ of degree at most $\vert
\vec{n}\vert-1$ by means of interpolatory conditions.

{\it Second step}. Find the lowering operator and express its action on $M_{q,\vec{n}}^{\vec{\alpha},\beta}(s)$ as a linear combination of the basis elements of  $\mathbb{V}$.

{\it Third step}. Combine the lowering and the raising operators~\eqref{ROpqMCharlier} to get an $(r+1)$-order $q$-dif\/ference equation
(in the same fashion that~\cite{arvesu-esposito}, \cite{arvesu-ramirez}, and~\cite{lee}).

These steps represent the general features of the algebraic approach we are using in this section, however some ad hoc' computations are needed because of the dependence of the explicit expressions involved in the above steps on the given family of multiple orthogonal polynomials.

\begin{lemma}
Let $\mathbb{V}$ be the linear subspace of polynomials $Q(s)$ on the lattice $x(s)$ of degree at most $\vert
\vec{n}\vert-1$ defined by the following conditions
\begin{gather*}
\sum\limits_{s=0}^{\infty}Q(s) [s]_{q}^{(k)}\upsilon_{q}^{q\alpha_{j},\beta +1}(s)\bigtriangledown x_{1}(s)=0,
\qquad
0\leq k\leq n_{j}-2
\qquad
\text{and}
\qquad
j=1,\ldots,r.
\end{gather*}
Then, the system $\{M_{q,\vec{n}-\vec{e}_{i}}^{\vec{\alpha}_{i,q},\beta +1}(s) \}_{i=1}^{r}$, where
$\vec{\alpha}_{i,q}=(\alpha_{1},\ldots,q\alpha_{i},\ldots,\alpha_{r})$, is a~basis for $\mathbb{V}$.
\label{LI}
\end{lemma}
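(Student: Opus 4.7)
The plan is to prove the statement in three steps: (i) establish that $\dim\mathbb V = r$; (ii) verify that each $Q_i := M_{q, \vec n - \vec e_i}^{\vec{\alpha}_{i,q}, \beta+1}$ belongs to $\mathbb V$; and (iii) show the $r$ polynomials $\{Q_i\}_{i=1}^{r}$ are linearly independent. Since $r$ linearly independent vectors in an $r$-dimensional space automatically form a basis, these three ingredients complete the proof.

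For (i), the ambient space of polynomials in $x(s)$ of degree at most $|\vec n|-1$ has dimension $|\vec n|$ and the defining conditions of $\mathbb V$ amount to $\sum_{j=1}^{r}(n_j - 1) = |\vec n| - r$ linear functionals. These are precisely the orthogonality relations attached to the multi-index $\vec n - \vec 1$ and the shifted parameter vector $q\vec\alpha$, which satisfy the hypotheses of Lemma~\ref{le-1} because $(q\alpha_i)/(q\alpha_j) = \alpha_i/\alpha_j \ne q^k$. Hence the shifted measures form an AT system, the multi-index $\vec n - \vec 1$ is normal, the constraint map has full rank $|\vec n| - r$, and $\dim\mathbb V = r$.

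For (ii), I split the verification according to whether $j = i$ or $j \ne i$. When $j = i$ the weight $\upsilon_q^{q\alpha_i, \beta+1}$ coincides with $\upsilon_q^{(\vec\alpha_{i,q})_i, \beta+1}$ and the range $0\le k \le n_i - 2$ equals $0\le k \le (\vec n - \vec e_i)_i - 1$, so membership is immediate from the defining orthogonality of $Q_i$. When $j \ne i$, the elementary identity
\begin{equation*}
\upsilon_q^{q\alpha_j, \beta+1}(s) = q^s\, \upsilon_q^{\alpha_j, \beta+1}(s) = \bigl((q-1) x(s) + 1\bigr)\, \upsilon_q^{\alpha_j, \beta+1}(s)
\end{equation*}
converts the target condition into orthogonality of $Q_i$ against the polynomial $\bigl((q-1) x(s) + 1\bigr) [s]_q^{(k)}$, of degree $k+1\le n_j-1$ in $x(s)$, with respect to $\upsilon_q^{\alpha_j, \beta+1}$; this holds because $(\vec n - \vec e_i)_j = n_j$ for $j\neq i$.

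For (iii), the main obstacle, I will exploit the same identity together with the expansion $x(s)[s]_q^{(k)} = q^k [s]_q^{(k+1)} + x(k)[s]_q^{(k)}$, which yields $q^s [s]_q^{(k)} = q^k\bigl([s]_q^{(k)} + (q-1) [s]_q^{(k+1)}\bigr)$. Fix an index $i_0$ and set $A_k := \sum_{s\ge 0} Q_{i_0}(s) [s]_q^{(k)} \upsilon_q^{\alpha_{i_0}, \beta+1}(s) \bigtriangledown x_1(s)$. The orthogonality of $Q_{i_0}$ at $j = i_0$ (weight $\upsilon_q^{q\alpha_{i_0}, \beta+1}$, $0 \le k \le n_{i_0}-2$) rewrites, via the displayed identity, as the first-order recursion $(q-1)A_{k+1} + A_k = 0$. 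Now pair $\sum_i c_i Q_i = 0$ with $[s]_q^{(n_{i_0}-1)} \upsilon_q^{\alpha_{i_0}, \beta+1}(s) \bigtriangledown x_1(s)$: for $i \ne i_0$ the contributions vanish because $(\vec\alpha_{i,q})_{i_0} = \alpha_{i_0}$ and the range $0 \le k \le (\vec n - \vec e_i)_{i_0} - 1 = n_{i_0}-1$ covers the exponent $n_{i_0}-1$; what survives is $c_{i_0} A_{n_{i_0}-1} = 0$. If $A_{n_{i_0}-1}$ were zero, the recursion run backwards forces $A_k = 0$ for $0 \le k \le n_{i_0}-1$; combined with the remaining orthogonality of $Q_{i_0}$ for $j \ne i_0$, this supplies the full $|\vec n|$ conditions characterizing $M_{q,\vec n}^{\vec\alpha, \beta+1}$, contradicting $\deg Q_{i_0} = |\vec n| - 1 < |\vec n|$ by the normality of $\vec n$ in the original AT system. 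Hence $A_{n_{i_0}-1} \ne 0$ and every $c_{i_0}$ vanishes.
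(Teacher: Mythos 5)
Your proof is correct, and it follows the same three-part skeleton as the paper (membership of the $r$ polynomials in $\mathbb{V}$, their linear independence, and a dimension bound forcing them to span), but the execution of the decisive step is genuinely different. The paper establishes independence by pairing a putative dependence relation with $[s]_{q}^{(n_{k}-1)}\upsilon_{q}^{\alpha_{k},\beta}(s)\bigtriangledown x_{1}(s)$ and invoking the biorthogonality relations \eqref{Orthog_Cond}, which it asserts without proof; you instead pair with $[s]_{q}^{(n_{i_0}-1)}\upsilon_{q}^{\alpha_{i_0},\beta+1}(s)\bigtriangledown x_{1}(s)$, observe that the off-diagonal terms die by the defining orthogonality of $M_{q,\vec{n}-\vec{e}_{i}}^{\vec{\alpha}_{i,q},\beta+1}$, and then prove the non-vanishing of the surviving diagonal term by converting the $q\alpha_{i_0}$-orthogonality into the first-order recursion $(q-1)A_{k+1}+A_{k}=0$ and appealing to normality of $\vec{n}$ for the $(\vec{\alpha},\beta+1)$ AT system. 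This buys you a self-contained argument where the paper leans on an unproved identity. You also close two gaps the paper leaves implicit: the membership check for $j\neq i$, which really does require the conversion $\upsilon_{q}^{q\alpha_{j},\beta+1}(s)=\bigl((q-1)x(s)+1\bigr)\upsilon_{q}^{\alpha_{j},\beta+1}(s)$ raising the degree by one but staying within the available orthogonality range; and the claim $\dim\mathbb{V}\leq r$, which the paper deduces from merely counting the $|\vec{n}|-r$ imposed conditions --- a count that by itself only gives $\dim\mathbb{V}\geq r$ --- whereas you justify the full rank of the constraint map via normality of $\vec{n}-\vec{1}$ for the shifted AT system. The only cosmetic caveat is that your step (i) makes step (iii) logically sufficient on its own for the spanning conclusion (and vice versa the paper's route needs only the upper bound), so you prove slightly more than necessary; this is harmless.
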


\begin{proof}

Considering the orthogonality relations
\begin{gather*}
\sum\limits_{s=0}^{\infty}M_{q,\vec{n}-\vec{e}_{i}}^{\vec{\alpha}_{i,q},\beta +1}(s)
[s]_{q}^{(k)}\upsilon_{q}^{q\alpha_{j},\beta +1}(s) \bigtriangledown x_{1}(s) =0,
\qquad
0\leq k\leq n_{j}-2,
\qquad
j=1,\ldots,r,
\end{gather*}
we have that polynomials $M_{q,\vec{n}-\vec{e}_{i}}^{\vec{\alpha}_{i,q},\beta +1}(s)$, $i=1,\ldots,r$, belong to $\mathbb{V}$.

Let proceed by `reductio ad absurdum' and assume that there exists constants $\lambda_{i}$, $i=1,\ldots,r$, such that
\begin{gather*}
\sum\limits_{i=1}^{r}\lambda_{i}M_{q,\vec{n}-\vec{e}_{i}}^{\vec{\alpha}_{i,q},\beta +1}(s) =0,
\qquad
\text{where}
\qquad
\sum\limits_{i=1}^{r}\abs{\lambda_{i}}>0.
\end{gather*}
Then, multiplying the previous equation by $[s]_{q}^{(n_{k}-1)}\upsilon_{q}^{\alpha_{k},\beta }(s)\bigtriangledown
x_{1}(s)$ and then taking summation on~$s$ from $0$ to~$\infty$, one gets
\begin{gather*}
\sum\limits_{i=1}^{r}\lambda_{i}\sum\limits_{s=0}^{\infty}M_{q,\vec{n}-\vec{e}_{i}}^{\vec{\alpha}_{i,q},\beta +1}(s)
[s]_{q}^{(n_{k}-1)}\upsilon_{q}^{\alpha_{k},\beta }(s)\bigtriangledown x_{1}(s) =0.
\end{gather*}
Hence, from relations
\begin{gather}
\sum\limits_{s=0}^{\infty}M_{q,\vec{n}-\vec{e}_{i}}^{\vec{\alpha}_{i,q},\beta +1}(s)
[s]_{q}^{(n_{k}-1)}\upsilon_{q}^{\alpha_{k},\beta }(s) \bigtriangledown x_{1}(s) =c\delta_{i,k},
\qquad
c\in\mathbb{R}\setminus\{0\},
\label{Orthog_Cond}
\end{gather}
we deduce that $\lambda_{k}=0$ for $k=1,\ldots,r$.
Here $\delta_{i,k}$ represents the Kronecker delta symbol.
Therefore, $\{M_{q,\vec{n}-\vec{e}_{i}}^{\vec{\alpha}_{i,q},\beta +1}(s) \}_{i=1}^{r}$ is linearly independent in
$\mathbb{V}$.
Furthermore, we know that any polynomial of $\mathbb{V}$ can be determined with $\vert \vec{n}\vert$
coef\/f\/icients while $(\vert \vec{n}\vert -r)$ linear conditions are imposed on $\mathbb{V}$,
consequently the dimension of $\mathbb{V}$ is at most~$r$.
Hence, the system $\{M_{q,\vec{n}-\vec{e}_{i}}^{\vec{\alpha}_{i,q},\beta +1}(s)\}_{i=1}^{r}$ spans $\mathbb{V}$, which
completes the proof.
\end{proof}

Now we will prove that operator~\eqref{lower} is indeed a~lowering operator for the sequence of $q$-Meixner multiple orthogonal polynomials of the first kind 
$M_{q,\vec{n}}^{\vec{\alpha},\beta +1}(s)$.

\begin{lemma}
There holds the following relation
\begin{gather}
\Delta M_{q,\vec{n}}^{\vec{\alpha},\beta }(s) =\sum\limits_{i=1}^{r}q^{|\vec{n}|-n_{i}+1/2}
\frac{1-\alpha_{i}q^{n_{i}+\beta }}{1-\alpha_{i}q^{|\vec{n}|+\beta }}[n_{i}]_{q}^{(1)}M_{q,\vec{n}-\vec{e}_{i}}^{\vec{\alpha}_{i,q},\beta +1}(s).
\label{Rela_qChar}
\end{gather}
\end{lemma}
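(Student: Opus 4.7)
The plan follows the three-step scheme outlined at the start of Section~\ref{df-eq}, now applied to the lowering identity \eqref{Rela_qChar}.

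\emph{Step 1 (Containment in $\mathbb{V}$).} Since $\Delta$ acts as the $q$-derivative on the lattice $x(s)=(q^s-1)/(q-1)$ and $\deg M_{q,\vec n}^{\vec\alpha,\beta}=\vert\vec n\vert$, the image $\Delta M_{q,\vec n}^{\vec\alpha,\beta}(s)$ is a polynomial in $x(s)$ of degree at most $\vert\vec n\vert-1$. To show it lies in $\mathbb{V}$, I fix $j\in\{1,\ldots,r\}$ and $0\le k\le n_j-2$, use the identity $\Delta f(s)\,\bigtriangledown x_1(s)=\bigtriangleup f(s)$, and apply summation by parts (the boundary terms vanishing since $\upsilon_q^{q\alpha_j,\beta+1}(-1)=\upsilon_q^{q\alpha_j,\beta+1}(\infty)=0$). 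A direct computation yields the Pearson-type identity
\[
\bigtriangledown\upsilon_q^{q\alpha_j,\beta+1}(s)=\frac{q^{1/2}\,\phi_j(s)}{\alpha_j(1-q^\beta)}\,\upsilon_q^{\alpha_j,\beta}(s)\,\bigtriangledown x_1(s),
\]
with $\phi_j$ linear in $x(s)$, which combined with the elementary rule $\bigtriangledown[s]_q^{(k)}=q^{s-k}[k]_q^{(1)}[s-1]_q^{(k-1)}$ and the discrete Leibniz formula for $\bigtriangledown$ rewrites the transformed sum as $\sum_s M_{q,\vec n}^{\vec\alpha,\beta}(s)\,Q_{j,k}(s)\,\upsilon_q^{\alpha_j,\beta}(s)\,\bigtriangledown x_1(s)$, where $Q_{j,k}$ is a polynomial in $x(s)$ of degree at most $k+1\le n_j-1$. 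Hence the sum vanishes by the orthogonality \eqref{NOrthC}.

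\emph{Step 2 (Coefficient extraction).} Lemma~\ref{LI} now produces scalars $\lambda_1,\ldots,\lambda_r$ with $\Delta M_{q,\vec n}^{\vec\alpha,\beta}(s)=\sum_{i=1}^{r}\lambda_i\,M_{q,\vec n-\vec e_i}^{\vec\alpha_{i,q},\beta+1}(s)$. Multiplying by $[s]_q^{(n_k-1)}\upsilon_q^{\alpha_k,\beta}(s)\bigtriangledown x_1(s)$ and summing over $s\ge 0$ collapses the right-hand side, thanks to \eqref{Orthog_Cond}, to $\lambda_k c_k$ for an explicit nonzero constant $c_k$. The same summation-by-parts/Pearson argument from Step~1, now with $k$ replaced by $n_k-1$, rewrites the left-hand side as $\sum_s M_{q,\vec n}^{\vec\alpha,\beta}(s)\,R_k(s)\,\upsilon_q^{\alpha_k,\beta}(s)\,\bigtriangledown x_1(s)$ with $R_k(s)$ of degree exactly $n_k$ in $x(s)$. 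Orthogonality \eqref{NOrthC} kills every sub-leading piece of $R_k$, leaving only the leading coefficient of $R_k$ times the squared norm $h_{\vec n,k}:=\sum_s M_{q,\vec n}^{\vec\alpha,\beta}(s)\,[s]_q^{(n_k)}\,\upsilon_q^{\alpha_k,\beta}(s)\,\bigtriangledown x_1(s)$; dividing by $c_k$ gives $\lambda_k$.

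The main obstacle is the explicit evaluation of $\lambda_k$ in Step~2. The leading coefficient of $R_k$ inherits contributions from $[n_k]_q^{(1)}$ (via $\bigtriangledown[s]_q^{(n_k-1)}$), from the leading coefficient of $\phi_k$ (proportional to $1-\alpha_kq^{\beta+1}$), and from several powers of $q$ generated by the Pearson identity, while the ratio $h_{\vec n,k}/c_k$ is accessible from the Rodrigues formula \eqref{RFormula} through successive summations by parts. These factors must telescope into the advertised expression $q^{\vert\vec n\vert-n_k+1/2}(1-\alpha_kq^{n_k+\beta})/(1-\alpha_kq^{\vert\vec n\vert+\beta})\,[n_k]_q^{(1)}$, which is where the bulk of the computational effort lies.
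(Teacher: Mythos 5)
Your Step 1 and the set-up of Step 2 coincide with the paper's proof: containment of $\Delta M_{q,\vec{n}}^{\vec{\alpha},\beta}$ in $\mathbb{V}$ via summation by parts and a Pearson-type rewriting of $\nabla\bigl[[s]_q^{(k)}\upsilon_q^{q\alpha_j,\beta+1}(s)\bigr]$, followed by pairing against $[s]_q^{(n_k-1)}$ times a weight to isolate the $k$-th coefficient. The genuine gap is in the last stage: you reduce $\lambda_k$ to a ratio of two norm-like sums, $h_{\vec{n},k}=\sum_s M_{q,\vec{n}}^{\vec{\alpha},\beta}(s)[s]_q^{(n_k)}\upsilon_q^{\alpha_k,\beta}(s)\bigtriangledown x_1(s)$ over $c_k$, and then assert that these quantities are ``accessible from the Rodrigues formula'' and ``must telescope'' into the stated coefficient. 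That is precisely the content of the lemma, and it is left unproved. The paper never evaluates either norm. Instead it uses the raising operator \eqref{ROpqMCharlier}, rewritten in divergence form as \eqref{eqcha-ra1},
\begin{equation*}
\frac{q^{-1/2}\bigl(1-\alpha_{k}q^{|\vec{n}|+\beta }\bigr)}{\alpha_{k}x(\beta)}\,\upsilon_{q}^{\alpha_{k},\beta}(s)\, M_{q,\vec{n}}^{\vec{\alpha},\beta}(s) =-q^{|\vec{n}|-1/2}\,\nabla
\bigl[\upsilon_{q}^{q\alpha_{k},\beta +1}(s)\, M_{q,\vec{n}-\vec{e}_{k}}^{\vec{\alpha}_{k,q},\beta +1}(s)\bigr],
\end{equation*}
substitutes this into $h_{\vec{n},k}$, performs one more summation by parts, and applies $\Delta [s]_{q}^{(n_{k})}=q^{3/2-n_{k}}[n_{k}]_{q}^{(1)}[s]_{q}^{(n_{k}-1)}$; this converts $h_{\vec{n},k}$ into an explicit multiple of exactly the sum $c_k$ appearing on the right-hand side, so the ratio is read off without computing either quantity. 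This identity is the missing idea in your write-up; without it (or a completed Rodrigues-based norm evaluation, which would work in principle but is considerably more laborious and is not carried out), the factor $q^{|\vec{n}|-n_k+1/2}\frac{1-\alpha_kq^{n_k+\beta}}{1-\alpha_kq^{|\vec{n}|+\beta}}[n_k]_q^{(1)}$ is not established.

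A secondary inconsistency: in Step 2 you pair against $[s]_q^{(n_k-1)}\upsilon_q^{\alpha_k,\beta}(s)\bigtriangledown x_1(s)$, following \eqref{Orthog_Cond} literally, but then invoke ``the same summation-by-parts/Pearson argument from Step 1,'' which requires the starting weight $\upsilon_q^{q\alpha_k,\beta+1}$ (the $\nabla$ of that weight is what produces $\upsilon_q^{\alpha_k,\beta}$ times a polynomial, meshing with the orthogonality \eqref{NOrthC}). Starting from $\upsilon_q^{\alpha_k,\beta}$ the summation by parts lands on a different weight and the argument does not close. You should pair against $\upsilon_q^{q\alpha_k,\beta+1}$ as in \eqref{Ident_I} and verify the corresponding biorthogonality there.
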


\begin{proof}
Using summation by parts we have
\begin{gather}
\sum\limits_{s=0}^{\infty}\Delta M_{q,\vec{n}}^{\vec{\alpha},\beta }(s)[s]_{q}^{(k)}\upsilon_{q}^{q\alpha_{j},\beta +1}(s) \bigtriangledown x_{1}(s)
=-\sum\limits_{s=0}^{\infty}M_{q,\vec{n}}^{\vec{\alpha},\beta }(s)\nabla\big[[s]_{q}^{(k)}\upsilon_{q}^{q\alpha_{j},\beta +1}(s)\big] \bigtriangledown x_{1}(s)
\nonumber
\\
\hphantom{\sum\limits_{s=0}^{\infty}\Delta M_{q,\vec{n}}^{\vec{\alpha},\beta }(s)[s]_{q}^{(k)}\upsilon_{q}^{q\alpha_{j},\beta +1}(s) \bigtriangledown x_{1}(s)}{}
=-\sum\limits_{s=0}^{\infty}M_{q,\vec{n}}^{\vec{\alpha},\beta }(s)\varphi_{j,k}(s)\upsilon_{q}^{\alpha_{j},\beta }(s) \bigtriangledown x_{1}(s),
\label{inte-1}
\end{gather}
where
\begin{gather*}
\varphi_{j,k}(s) =q^{1/2}\left(\frac{q^{\beta}x(s)}{x(\beta)}+1\right)[s]_{q}^{(k)}-q^{-1/2}\frac{x(s)}{\alpha_{j}x(\beta)}[s-1]_{q}^{(k)},
\end{gather*}
is a~polynomial of degree $\leq k+1$ in the variable $x(s)$.
Consequently, from the orthogonality conditions~\eqref{NOrthC} we get
\begin{gather*}
\sum\limits_{s=0}^{\infty}\Delta M_{q,\vec{n}}^{\vec{\alpha},\beta }(s) [s]_{q}^{(k)}\upsilon_{q}^{q\alpha_{j},\beta +1}(s)
\bigtriangledown x_{1}(s)=0,
\qquad
0\leq k\leq n_{j}-2,
\qquad
j=1,\ldots,r.
\end{gather*}
Hence, from Lemma~\ref{LI}, $\Delta M_{q,\vec{n}}^{\vec{\alpha},\beta }(s) \in \mathbb{V}$.
Moreover, $\Delta M_{q,\vec{n}}^{\vec{\alpha},\beta }(s)$ can univocally be expressed as a~linear combination of polynomials
$\{M_{q,\vec{n}-\vec{e}_{i}}^{\vec{\alpha}_{i,q},\beta +1}(s) \}_{i=1}^{r}$, i.e.~
\begin{gather}
\Delta M_{q,\vec{n}}^{\vec{\alpha},\beta }(s) =\sum\limits_{i=1}^{r}\beta_{i}M_{q,\vec{n}-\vec{e}_{i}}^{\vec{\alpha}_{i,q},\beta +1}(s),
\qquad
\sum\limits_{i=1}^{r}\abs{\beta_{i}}>0.
\label{eq-del}
\end{gather}
Multiplying both sides of the equation~\eqref{eq-del} by $[s]_{q}^{(n_{k}-1)}\upsilon_{q}^{q\alpha_{k},\beta +1}(s)
\bigtriangledown x_{1}(s)$ and using relations~\eqref{Orthog_Cond} one has
\begin{multline}
\sum\limits_{s=0}^{\infty}\Delta M_{q,\vec{n}}^{\vec{\alpha},\beta }(s)[s]_{q}^{(n_{k}-1)}\upsilon_{q}^{q\alpha_{k},\beta +1}(s)\bigtriangledown x_{1}(s)
=\sum\limits_{i=1}^{r}\beta_{i}\sum\limits_{s=0}^{\infty}M_{q,\vec{n}-\vec{e}_{i}}^{\vec{\alpha}_{i,q},\beta +1}(s)
[s]_{q}^{(n_{k}-1)}\upsilon_{q}^{q\alpha_{k},\beta +1}(s) \bigtriangledown x_{1}(s)
\\
=\beta_{k}\sum\limits_{s=0}^{\infty}M_{q,\vec{n}-\vec{e}_{k}}^{\vec{\alpha}_{i,q},\beta +1}(s)[s]_{q}^{(n_{k}-1)}\upsilon_{q}^{q\alpha_{k},\beta +1}(s)
\bigtriangledown x_{1}(s).
\label{Ident_I}
\end{multline}
If we replace $[s]_{q}^{(k)}$ by $[s]_{q}^{(n_k-1)}$ in the left-hand side of equation~\eqref{inte-1}, then left-hand
side of equation~\eqref{Ident_I} transforms into relation
\begin{multline}
\sum\limits_{s=0}^{\infty}\Delta M_{q,\vec{n}}^{\vec{\alpha},\beta }(s)[s]_{q}^{(n_{k}-1)}\upsilon_{q}^{q\alpha_{k},\beta +1}(s) \bigtriangledown x_{1}(s)
=-\sum\limits_{s=0}^{\infty}M_{q,\vec{n}}^{\vec{\alpha},\beta }(s)\varphi_{k,n_{k}-1}(s)\upsilon_{q}^{\alpha_{k},\beta }(s) \bigtriangledown x_{1}(s)
\\
=\frac{q^{-1/2}\left(1-\alpha_{k}q^{n_{k}+\beta }\right)}{\alpha_{k}x(\beta)}
\sum\limits_{s=0}^{\infty} M_{q,\vec{n}}^{\vec{\alpha},\beta }(s)[s]_{q}^{(n_{k})}
\upsilon_{q}^{\alpha_{k},\beta }(s)\bigtriangledown x_{1}(s).
\label{eqcha-ra}
\end{multline}
Here we have used that $x(s)[s-1]_{q}^{(n_{k}-1)}=[s]_{q}^{(n_{k})}$ to get
$\varphi_{k,n_{k}-1}(s) =- \frac{q^{-1/2}\left(1-\alpha_{k}q^{n_{k}+\beta }\right)}{\alpha_{k}x(\beta)}[s]_{q}^{(n_{k})}+ \text{lower terms}$.

On the other hand, from~\eqref{ROpqMCharlier} one has that
\begin{gather}
\frac{q^{-1/2}\left(1-\alpha_{k}q^{|\vec{n}|+\beta }\right)}{\alpha_{k}x(\beta)}\upsilon_{q}^{\alpha_{k},\beta}(s) M_{q,\vec{n}}^{\vec{\alpha},\beta}(s) =-q^{|\vec{n}|-1/2}\nabla
\big[\upsilon_{q}^{q\alpha_{k},\beta +1}(s) M_{q,\vec{n}-\vec{e}_{k}}^{\vec{\alpha}_{i,q},\beta +1}(s)\big].
\label{eqcha-ra1}
\end{gather}
Then, by conveniently substituting~\eqref{eqcha-ra1} in the right-hand side of equation~\eqref{eqcha-ra} and using once
more summation by parts, we get
\begin{multline}
\sum\limits_{s=0}^{\infty}\Delta M_{q,\vec{n}}^{\vec{\alpha},\beta}(s)
[s]_{q}^{(n_{k}-1)}\upsilon_{q}^{q\alpha_{k},\beta +1}(s) \bigtriangledown x_{1}(s)\\
 =-q^{|\vec{n}|-1}\frac{1-\alpha_{k}q^{n_{k}+\beta }}{1-\alpha_{k}q^{|\vec{n}|+\beta }}\sum\limits_{s=0}^{\infty}[s]_{q}^{(n_{k})}\nabla \left[\upsilon_{q}^{q\alpha_{k},\beta +1}(s)
M_{q,\vec{n}-\vec{e}_{k}}^{\vec{\alpha}_{i,q},\beta +1}(s) \right] \bigtriangledown x_{1}(s)
\\
 =q^{|\vec{n}|-1}\frac{1-\alpha_{k}q^{n_{k}+\beta }}{1-\alpha_{k}q^{|\vec{n}|+\beta }}\sum\limits_{s=0}^{\infty}M_{q,\vec{n}-\vec{e}_{k}}^{\vec{\alpha}_{i,q},\beta +1}(s) \Delta
\big[[s]_{q}^{(n_{k})}\big]\upsilon_{q}^{q\alpha_{k},\beta +1}(s) \bigtriangledown x_{1}(s).\notag
\end{multline}
Since $\Delta [s]_{q}^{(n_{k})}=q^{3/2-n_{k}}[n_{k}]_{q}^{(1)}[s]_{q}^{(n_{k}-1)}$ we
f\/inally have
\begin{gather*}
\sum\limits_{s=0}^{\infty}\Delta M_{q,\vec{n}}^{\vec{\alpha},\beta}(s)
[s]_{q}^{(n_{k}-1)}\upsilon_{q}^{q\alpha_{k},\beta +1}(s) \bigtriangledown x_{1}(s)
\\
\qquad{}
=q^{|\vec{n}|-n_{k}+1/2}\frac{1-\alpha_{k}q^{n_{k}+\beta }}{1-\alpha_{k}q^{|\vec{n}|+\beta }}[n_{k}]_{q}^{(1)}\sum\limits_{s=0}^{\infty}M_{q,\vec{n}-\vec{e}_{k}}^{\vec{\alpha}_{i,q},\beta +1}(s)
[s]_{q}^{(n_{k}-1)}\upsilon_{q}^{q\alpha_{k},\beta +1}(s) \bigtriangledown x_{1}(s).
\end{gather*}
Therefore, comparing this equation with~\eqref{Ident_I} we obtain the coef\/f\/icients in the expansion~\eqref{eq-del},
i.e.~
\begin{gather*}
\beta_{k}=q^{|\vec{n}|-n_{k}+1/2}\frac{1-\alpha_{k}q^{n_{k}+\beta }}{1-\alpha_{k}q^{|\vec{n}|+\beta }}[n_{k}]_{q}^{(1)},
\end{gather*}
which proves relation~\eqref{Rela_qChar}.
\end{proof}

\begin{theorem}
The $q$-Meixner multiple orthogonal polynomial of the first kind $M_{q,\vec{n}}^{\vec{\alpha},\beta}(s)$ satisfies the following
$(r+1)$-order $q$-difference equation
\begin{gather}
\prod\limits_{i=1}^{r}\mathcal{D}_{q}^{\alpha_{i},\beta}\Delta M_{q,\vec{n}}^{\vec{\alpha},\beta}(s)
=-\sum\limits_{i=1}^{r}q^{\vert \vec{n}\vert -n_{i}+1}\frac{1-\alpha_{i}q^{n_{i}+\beta }}{1-\alpha_{i}q^{|\vec{n}|+\beta }}[n_{i}]_{q}^{(1)}
\prod\limits_{\substack{j=1
\\
j\neq i}}^{r}\mathcal{D}_{q}^{\alpha_{j},\beta}M_{q,\vec{n}}^{\vec{\alpha},\beta}(s).
\label{q-DEquation}
\end{gather}
\end{theorem}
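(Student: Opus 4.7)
The proof strategy is to compose the $r$ raising operators with the lowering relation (\ref{Rela_qChar}). Concretely, I would apply the operator $\prod_{j=1}^{r}\mathcal{D}_q^{\alpha_j,\beta}$ to both sides of the identity proved in the preceding lemma, namely
$$\Delta M_{q,\vec{n}}^{\vec{\alpha},\beta}(s)=\sum_{i=1}^{r}q^{|\vec{n}|-n_i+1/2}\,\frac{1-\alpha_iq^{n_i+\beta}}{1-\alpha_iq^{|\vec{n}|+\beta}}\,[n_i]_q^{(1)}\,M_{q,\vec{n}-\vec{e}_i}^{\vec{\alpha}_{i,q},\beta+1}(s).$$
The left-hand side then immediately produces $\prod_{j=1}^{r}\mathcal{D}_q^{\alpha_j,\beta}\Delta M_{q,\vec{n}}^{\vec{\alpha},\beta}(s)$, i.e. the left-hand side of (\ref{q-DEquation}).

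The work on the right-hand side rests on the observation that the polynomial $M_{q,\vec{n}-\vec{e}_i}^{\vec{\alpha}_{i,q},\beta+1}(s)$ carries $q\alpha_i$ in its $i$-th parameter slot and $\beta+1$ in the $\beta$-slot. A direct application of the raising formula (\ref{ROpqMCharlier}) with these shifted parameters yields
$$\mathcal{D}_q^{q\alpha_i,\beta+1}M_{q,\vec{n}-\vec{e}_i}^{\vec{\alpha}_{i,q},\beta+1}(s)=-q^{1/2}M_{q,\vec{n}}^{\vec{\alpha},\beta}(s),$$
since $(q\alpha_i)/q=\alpha_i$ and $(\beta+1)-1=\beta$ bring the multi-index, parameter vector and $\beta$-parameter back to $(\vec{n},\vec{\alpha},\beta)$. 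Bringing the $i$-th operator of the product $\prod_{j=1}^{r}\mathcal{D}_q^{\alpha_j,\beta}$ to the front of the $i$-th summand, the remaining $r-1$ raising operators act on $M_{q,\vec{n}}^{\vec{\alpha},\beta}(s)$ and produce the factor $\prod_{j\neq i}\mathcal{D}_q^{\alpha_j,\beta}M_{q,\vec{n}}^{\vec{\alpha},\beta}(s)$ that appears in (\ref{q-DEquation}). The constants then combine as $-q^{1/2}\cdot q^{|\vec{n}|-n_i+1/2}=-q^{|\vec{n}|-n_i+1}$, reproducing exactly the coefficients in the theorem.

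The main technical point will be to justify that the composition $\prod_{j=1}^{r}\mathcal{D}_q^{\alpha_j,\beta}$ does not depend on the order in which the individual raising operators are applied, so that the $i$-th operator may always be singled out to act first on the $i$-th summand. Since each $\mathcal{D}_q^{\alpha_j,\beta}$ raises only the $j$-th component of the multi-index and shifts only the $j$-th entry of $\vec{\alpha}$ (together with a common decrease of $\beta$ by one), different orderings lead to the same final polynomial via distinct intermediate parameter configurations. This commutation argument parallels the one used in \cite{arvesu-ramirez,lee} for related $q$-multiple orthogonal families, and, together with the computation above, closes the proof of (\ref{q-DEquation}).
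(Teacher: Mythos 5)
Your proposal is correct and follows essentially the same route as the paper: apply the commuting product of raising operators to the lowering relation \eqref{Rela_qChar}, single out the $i$-th operator to act on $M_{q,\vec{n}-\vec{e}_{i}}^{\vec{\alpha}_{i,q},\beta+1}(s)$ via \eqref{ROpqMCharlier} with shifted parameters, and combine $-q^{1/2}$ with $q^{|\vec{n}|-n_i+1/2}$. Your explicit remark that the operator acting on the $i$-th summand is really $\mathcal{D}_q^{q\alpha_i,\beta+1}$ (so that the parameters return to $(\vec{n},\vec{\alpha},\beta)$) is in fact slightly more careful than the paper's notation.
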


\begin{proof}
Since operators~\eqref{ROpqMCharlier} are commuting, we write
\begin{gather}
\prod\limits_{i=1}^{r}\mathcal{D}_{q}^{\alpha_{i},\beta}
=\left(\prod\limits_{\substack{j=1\\j\neq i}}^{r} \mathcal{D}_{q}^{\alpha_{j},\beta}\right) \mathcal{D}_{q}^{\alpha_{i},\beta},
\label{interm}
\end{gather}
and then using~\eqref{ROpqMCharlier}, by acting on equation~\eqref{Rela_qChar} with the product of
operators~\eqref{interm}, we obtain the following relation
\begin{gather*}
\prod\limits_{i=1}^{r}\mathcal{D}_{q}^{\alpha_{i},\beta}\Delta M_{q,\vec{n}}^{\vec{\alpha},\beta}(s)
 = \sum\limits_{i=1}^{r}q^{|\vec{n}|-n_{i}+1/2}\frac{1-\alpha_{i}q^{n_{i}+\beta }}{1-\alpha_{i}q^{|\vec{n}|+\beta }}[n_{i}]_{q}^{(1)}\left(\prod\limits_{\substack{j=1\\j\neq i}}^{r}\mathcal{D}_{q}^{\alpha_{j},\beta}\right)
\mathcal{D}_{q}^{\alpha_{i},\beta}M_{q,\vec{n}-\vec{e}_{i}}^{\vec{\alpha}_{i,q},\beta +1}(s)
\\
\hphantom{\prod\limits_{i=1}^{r}\mathcal{D}_{q}^{\alpha_{i},\beta}\Delta M_{q,\vec{n}}^{\vec{\alpha},\beta}(s)}{}
 = -\sum\limits_{i=1}^{r}q^{\vert \vec{n}\vert -n_{i}+1}\frac{1-\alpha_{i}q^{n_{i}+\beta }}{1-\alpha_{i}q^{|\vec{n}|+\beta }}[n_{i}
]_{q}^{(1)}\prod\limits_{\substack{j=1
\\
j\neq i}}^{r} \mathcal{D}_{q}^{\alpha_{j},\beta}M_{q,\vec{n}}^{\vec{\alpha},\beta}(s),
\end{gather*}
which proves~\eqref{q-DEquation}.
\end{proof}

\section{Nearest neighbor recurrence relation}\label{rr}

Let us start recalling that for any function $f(s)$ defined on the discrete variable $s$ and a positive integer $n_{i}$ there holds (see Lemma 5.1 from~\cite{arvesu-ramirez})
\begin{equation}
\mathcal{M}_{q,n_{i}}^{\alpha_{i}}\left(x(s) f(s)\right) =q^{-n_{i}+1/2}x\left(n_{i}\right) \left(\alpha_{i}\right)^{-s}\nabla^{n_{i}-1}
\left(\alpha_{i}q^{n_{i}}\right)^{s}f(s) +\dfrac{ x(s) -x\left(n_{i}\right)}{q^{n_{i}} }\mathcal{D}_{q,n_{i}}^{\alpha_{i}}f(s),
\label{eq-lem-meix}
\end{equation}
where difference operator $\mathcal{M}_{q,n_{i}}^{\alpha_{i}}$ is given in \eqref{Op-q-Meix}.

Now, let us proceed with the nearest neighbor recurrence relation.

\begin{theorem}
The $q$-Meixner multiple orthogonal polynomials of the first kind satisfy the following $(r+2)$-term recurrence relation
\begin{align}
x(s) \tilde{M}_{q,\vec{n}}^{\vec{\alpha},\beta}(s) &=c_{\vec{n},k}\tilde{M}_{q,\vec{n}+\vec{e}_{k}}^{\vec{\alpha},\beta}(s)
+b_{\vec{n},k} \tilde{M}_{q,\vec{n}}^{\vec{\alpha},\beta}(s)
\nonumber
\\
&+\sum\limits_{i=1}^{r}x(n_{i})\prod\limits_{j\neq i}^{r}\frac{\alpha_{i}q^{|\vec{n}|}-\alpha_{j}q^{n_{j}}}{\alpha_{i}q^{n_{i}}-\alpha_{j}q^{n_{j}}}\prod\limits_{i=1}^{r}\frac{\alpha_{i}q^{|\vec{n}|+\beta -1}-1}{\alpha_{i}q^{|\vec{n}|+\beta +n_{i}-1}-1}\frac{\alpha_{i}q^{n_{i}}-1}{\alpha_{i}q^{|\vec{n}|+\beta +n_{i}}-1}
\nonumber
\\
&\times\frac{\alpha_{i}q^{|\vec{n}| +n_{i}-1}x(\beta +|\vec{n}|-1)}{\alpha_{i}q^{|\vec{n}|+\beta +n_{i}-1}-1}\frac{1}{\alpha_{i}q^{|\vec{n}|+\beta +n_{i}-2}-1}
\tilde{M}_{q,\vec{n}-\vec{e}_{i}}^{\vec{\alpha},\beta}(s),
\label{q-Rrelation}
\end{align}
where $\tilde{M}_{q,\vec{n}}^{\vec{\alpha},\beta}(s)=\left(\mathcal{K}_{q}^{\vec{n},\vec{\alpha},\beta}\right)^{-1}M_{q,\vec{n}}^{\vec{\alpha},\beta}(s)$ and
\begin{gather*}
b_{\vec{n},k}=\prod\limits_{i=1}^{r}\frac{\alpha_{i}q^{|\vec{n}|+\beta }-1}{\alpha_{i}q^{|\vec{n}|+\beta +n_{i}}-1}\left(\sum\limits_{i=1}^{r}q^{|\vec{n}|_{i}}x(n_{i})\frac{\alpha_{i}q^{\sum\limits_{j=i}^{r}n_{j}}-1}{\alpha_{i}q^{|\vec{n}|+\beta }-1}+\big(q-1\big)q^{|\vec{n}|+\beta}\prod\limits_{i=1}^{r}\frac{\alpha_{i}q^{n_{i}}-1}{\alpha_{i}q^{|\vec{n}|+\beta }-1}\right.
\\
\left.-\big(q-1\big)\prod\limits_{i=1}^{r}x(n_{i})\sum\limits_{i=1}^{r}\prod\limits_{j=1}^{i}\frac{1}{\alpha_{j}q^{|\vec{n}|+\beta }-1}+q^{|\vec{n}|}\frac{(\alpha_{k}q^{n_k+1})x(\beta +|\vec{n}|)}{1-\alpha_{k}q^{|\vec{n}|+\beta+n_k+1}}\right)
\\
-\sum\limits_{i=1}^{r}\prod\limits_{j\neq i}^{r}\frac{\alpha_{i}q^{|\vec{n}|}-\alpha_{j}q^{n_{j}}}{\alpha_{i}q^{n_{i}}-\alpha_{j}q^{n_{j}}}\frac{x(n_{i})\big(\alpha_{i}q^{n_{i}}-1\big)}{\alpha_{i}q^{|\vec{n}|+\beta +n_{i}}-1}\frac{\alpha_{i}q^{\beta + |\vec{n}|+n_{i}-1}}{\alpha_{i}q^{|\vec{n}|+\beta +n_{i}-1}-1},
\end{gather*}
$$
c_{\vec{n},k}=q^{|\vec{n}|-1/2}\prod\limits_{i=1}^{r}
\frac{\alpha_{i}q^{|\vec{n}|+\beta }-1}{\alpha_{i}q^{|\vec{n}|+\beta +n_{i}}-1}\frac{(\alpha_{k}q^{n_k+1})x(\beta +|\vec{n}|)}{\alpha_{k}q^{|\vec{n}|+\beta+n_k+1}-1}.
$$

\end{theorem}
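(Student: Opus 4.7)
The plan is to derive \eqref{q-Rrelation} directly from the Rodrigues-type formula \eqref{RFormula}. I write
\[\tilde{M}_{q,\vec{n}}^{\vec{\alpha},\beta}(s)=\rho(s)\,\mathcal{M}_{q,\vec{n}}^{\vec{\alpha}}\big(W_{\vec{n}}(s)\big),\]
with $\rho(s)=\Gamma_q(\beta)\Gamma_q(s+1)/\Gamma_q(\beta+s)$ and $W_{\vec{n}}(s)=\Gamma_q(\beta+|\vec{n}|+s)/[\Gamma_q(\beta+|\vec{n}|)\Gamma_q(s+1)]$, multiply by $x(s)$, and commute $x(s)$ through the product $\mathcal{M}_{q,\vec{n}}^{\vec{\alpha}}=\prod_{i=1}^{r}\mathcal{M}_{q,n_i}^{\alpha_i}$ by iterating \eqref{eq-lem-meix}. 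Rearranged, the identity reads
\[x(s)\,\mathcal{M}_{q,n_i}^{\alpha_i}f(s)=q^{n_i}\mathcal{M}_{q,n_i}^{\alpha_i}\big(x(s)f(s)\big)+x(n_i)\,\mathcal{M}_{q,n_i}^{\alpha_i}f(s)-q^{1/2}\,x(n_i)\,\alpha_i^{-s}\nabla^{n_i-1}\!\big[(\alpha_iq^{n_i})^sf(s)\big].\]
At step $i$ the first summand is fed back into the iteration, the second contributes a scalar multiple of $\tilde{M}_{q,\vec{n}}^{\vec{\alpha},\beta}$, and the third encodes (after conversion to standard form) a $\tilde{M}_{q,\vec{n}-\vec{e}_i}^{\vec{\alpha},\beta}$ piece.

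After $r$ iterations the remaining ``fully iterated'' term is $\rho(s)\,q^{|\vec{n}|}\mathcal{M}_{q,\vec{n}}^{\vec{\alpha}}(x(s)W_{\vec{n}}(s))$. Combining the $q$-Pascal identity $\Gamma_q(a+1)=x(a)\Gamma_q(a)$ with the shift $q^{\beta+|\vec{n}|}x(s)=x(\beta+|\vec{n}|+s)-x(\beta+|\vec{n}|)$ yields
\[x(s)\,W_{\vec{n}}(s)=\frac{x(\beta+|\vec{n}|)}{q^{\beta+|\vec{n}|}}\big(W_{\vec{n}+\vec{e}_k}(s)-W_{\vec{n}}(s)\big).\]
The $-W_{\vec{n}}$ piece is one further contribution to the $\tilde{M}_{q,\vec{n}}^{\vec{\alpha},\beta}$ coefficient, while the $W_{\vec{n}+\vec{e}_k}$ piece is matched, via \eqref{RFormula} for the multi-index $\vec{n}+\vec{e}_k$, with $\tilde{M}_{q,\vec{n}+\vec{e}_k}^{\vec{\alpha},\beta}$. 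The raising coefficient is pinned down most cleanly by comparing the leading $x(s)^{|\vec{n}|+1}$ coefficient of both sides, which forces $c_{\vec{n},k}=\mathcal{K}_q^{\vec{n}+\vec{e}_k,\vec{\alpha},\beta}/\mathcal{K}_q^{\vec{n},\vec{\alpha},\beta}$; substitution of the explicit form of $\mathcal{K}_q$ reproduces the displayed closed form.

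Assembling all contributions produces the $r+2$ pieces in \eqref{q-Rrelation}: the $r$ ``lowered'' pieces give the $\tilde{M}_{q,\vec{n}-\vec{e}_i}^{\vec{\alpha},\beta}$ coefficients; the ``unchanged'' pieces $x(n_i)q^{|\vec{n}|_i}$ produced at each commutation, together with the $-x(\beta+|\vec{n}|)q^{-\beta}$ from the fully iterated term and the scalars generated in identifying the raised term, combine into $b_{\vec{n},k}$; and the $W_{\vec{n}+\vec{e}_k}$ piece gives $c_{\vec{n},k}\tilde{M}_{q,\vec{n}+\vec{e}_k}^{\vec{\alpha},\beta}$.

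The main obstacle will be the algebraic bookkeeping. The lowered operator $\alpha_i^{-s}\nabla^{n_i-1}(\alpha_iq^{n_i})^s$ supplied by \eqref{eq-lem-meix} is not literally $\mathcal{M}_{q,n_i-1}^{\alpha_i}$: its natural rewriting introduces either the base-shifted operator $\mathcal{M}_{q,n_i-1}^{\alpha_iq}$ or an extra propagating factor of $q^s$. Reconciling this with the Rodrigues formula for $\tilde{M}_{q,\vec{n}-\vec{e}_i}^{\vec{\alpha},\beta}$ requires commuting the $q^s$ past the other $r-1$ operators $\mathcal{M}_{q,n_j}^{\alpha_j}$, and this precisely generates the rational factors $(\alpha_iq^{|\vec{n}|}-\alpha_jq^{n_j})/(\alpha_iq^{n_i}-\alpha_jq^{n_j})$ appearing in the lowering coefficients of the theorem. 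Assembling these factors together with the multiplicative $\mathcal{K}_q$-ratios across the three multi-indices $\vec{n}$, $\vec{n}+\vec{e}_k$, $\vec{n}-\vec{e}_i$ is the lengthy but mechanical step that finishes the proof.
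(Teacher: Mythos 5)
Your strategy is essentially the paper's own: start from the Rodrigues-type formula \eqref{RFormula}, commute $x(s)$ through the product $\mathcal{M}_{q,\vec{n}}^{\vec{\alpha}}=\prod_{i}\mathcal{M}_{q,n_i}^{\alpha_i}$ by iterating \eqref{eq-lem-meix}, and identify the resulting pieces with $\tilde{M}_{q,\vec{n}+\vec{e}_k}^{\vec{\alpha},\beta}$, $\tilde{M}_{q,\vec{n}}^{\vec{\alpha},\beta}$ and the $\tilde{M}_{q,\vec{n}-\vec{e}_i}^{\vec{\alpha},\beta}$. The paper just performs the two manipulations in the opposite order (first the raising identity \eqref{eqsvec}--\eqref{eq-rr-interm}, then the commutation \eqref{eq-rr-interm1}). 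Your rearrangement of \eqref{eq-lem-meix} is correct, your determination of $c_{\vec{n},k}$ as the ratio $\mathcal{K}_{q}^{\vec{n}+\vec{e}_k,\vec{\alpha},\beta}/\mathcal{K}_{q}^{\vec{n},\vec{\alpha},\beta}$ by comparing leading coefficients is a legitimate shortcut, and your diagnosis of the base-shift problem for the lowered pieces (reconciling $\alpha_i^{-s}\nabla^{n_i-1}(\alpha_iq^{n_i})^s$ with $\mathcal{M}_{q,n_i-1}^{\alpha_i}$, which generates the factors $(\alpha_iq^{|\vec{n}|}-\alpha_jq^{n_j})/(\alpha_iq^{n_i}-\alpha_jq^{n_j})$) is exactly right.

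The one step that fails as written is the treatment of the raised term. After substituting $x(s)W_{\vec{n}}(s)=q^{-\beta-|\vec{n}|}x(\beta+|\vec{n}|)\bigl(W_{\vec{n}+\vec{e}_k}(s)-W_{\vec{n}}(s)\bigr)$ into the fully iterated term, the raised piece is $\rho(s)\,\mathcal{M}_{q,\vec{n}}^{\vec{\alpha}}W_{\vec{n}+\vec{e}_k}(s)$, and this is \emph{not} what \eqref{RFormula} produces for the multi-index $\vec{n}+\vec{e}_k$: the Rodrigues operand there is $\mathcal{M}_{q,\vec{n}+\vec{e}_k}^{\vec{\alpha}}W_{\vec{n}+\vec{e}_k}$, whose $k$-th factor is $\alpha_k^{-s}\nabla^{n_k+1}(\alpha_kq^{n_k+1})^s$ rather than $\alpha_k^{-s}\nabla^{n_k}(\alpha_kq^{n_k})^s$ --- the same order-and-base mismatch you flag for the lowering terms but overlook here. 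Closing it requires peeling one $\nabla$ off $\nabla^{n_k+1}(\alpha_kq^{n_k+1})^sW_{\vec{n}+\vec{e}_k}$, which is precisely the paper's identity \eqref{eqsvec}; that identity shows $\mathcal{M}_{q,\vec{n}}^{\vec{\alpha}}W_{\vec{n}+\vec{e}_k}$ is a combination of $q^{-1/2}\mathcal{M}_{q,\vec{n}+\vec{e}_k}^{\vec{\alpha}}W_{\vec{n}+\vec{e}_k}$ \emph{and} a nonzero multiple (weight $(\alpha_kq^{|\vec{n}|+\beta+n_k+1})^{-1}$) of $\mathcal{M}_{q,\vec{n}}^{\vec{\alpha}}W_{\vec{n}}$. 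Consequently your explicit bookkeeping for the diagonal coefficient is off: the fully iterated term contributes $q^{|\vec{n}|}(\alpha_kq^{n_k+1})x(\beta+|\vec{n}|)/(1-\alpha_kq^{|\vec{n}|+\beta+n_k+1})$ to the bracket in $b_{\vec{n},k}$, not the $-q^{-\beta}x(\beta+|\vec{n}|)$ you record from the ``$-W_{\vec{n}}$ piece'' alone; the discrepancy is exactly the scalar injected by the operator mismatch. Once \eqref{eqsvec} is added, your argument coincides with the paper's.
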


\begin{proof} Consider equation 
\begin{multline}
(\alpha_{k})^{-s}\nabla^{n_{k}+1}(\alpha_{k}q^{n_k+1})^{s} \frac{\Gamma_{q}(\beta +|\vec{n}|+1+ s)}{\Gamma_{q}(\beta +|\vec{n}|+1)\Gamma_{q}(s+1)}\\
=(\alpha_{k})^{-s}\nabla^{n_{k}} \left[q^{-s+1/2}\bigtriangledown \left((\alpha_{k}q^{n_k+1})^{s}
\frac{\Gamma_{q}(\beta +|\vec{n}|+1+ s)}{\Gamma_{q}(\beta +|\vec{n}|+1)\Gamma_{q}(s+1)}\right)\right]
\\
=q^{1/2}(\alpha_{k})^{-s}\nabla^{n_{k}}(\alpha_{k}q^{n_k})^{s}\frac{\Gamma_{q}(\beta +|\vec{n}|+ s)}{\Gamma_{q}(\beta +|\vec{n}|)\Gamma_{q}(s+1)}
\\
+q^{1/2}\frac{\alpha_{k}q^{|\vec{n}|+\beta+n_k+1}-1}{(\alpha_{k}q^{n_k+1})x(\beta +|\vec{n}|)}
(\alpha_{k})^{-s}\nabla^{n_{k}}(\alpha_{k}q^{n_k})^{s} x(s)
\frac{\Gamma_{q}(\beta +|\vec{n}|+ s)}{\Gamma_{q}(\beta +|\vec{n}|)\Gamma_{q}(s+1)},\notag
\end{multline}
which can be rewritten in terms of dif\/ference operators~\eqref{Op-q-Meix} as follows
\begin{multline}
q^{-1/2}\mathcal{M}_{q,n_{k}+1}^{\alpha_{k}}\frac{\Gamma_{q}(\beta +|\vec{n}|+ 1+s)}{\Gamma_{q}(\beta +|\vec{n}|+1)\Gamma_{q}(s+1)}\\
=\mathcal{M}_{q,n_{k}}^{\alpha_{k}}\left(\frac{\Gamma_{q}(\beta +|\vec{n}|+ s)}{\Gamma_{q}(\beta +|\vec{n}|)\Gamma_{q}(s+1)}\left(1+\frac{\alpha_{k}q^{|\vec{n}|+\beta+n_k+1}-1}{(\alpha_{k}q^{n_k+1})x(\beta +|\vec{n}|)}
x(s)\right)\right).
\label{eqsvec}
\end{multline}
Since operators~\eqref{Op-q-Meix} are commuting the multiplication of equation~\eqref{eqsvec} from the left-hand side~by
the product $\left(\prod\limits_{\substack{j=1\\j\neq k}}^{r}\mathcal{M}_{q,n_{j}}^{\alpha_{j}}\right)$ yields
\begin{multline}
\frac{\alpha_{k}q^{|\vec{n}|+\beta+n_k+1}-1}{(\alpha_{k}q^{n_k+1})x(\beta +|\vec{n}|)}
\mathcal{M}_{q,\vec{n}}^{\vec{\alpha}}x(s)
\frac{\Gamma_{q}(\beta +|\vec{n}|+ s)}{\Gamma_{q}(\beta +|\vec{n}|)\Gamma_{q}(s+1)}\\
=q^{-1/2}
\mathcal{M}_{q,\vec{n} +\vec{e}_{k}}^{\vec{\alpha}}\frac{\Gamma_{q}(\beta +|\vec{n}|+1+ s)}{\Gamma_{q}(\beta +|\vec{n}|+1)\Gamma_{q}(s+1)}
-\mathcal{M}_{q,\vec{n}}^{\vec{\alpha}}\frac{\Gamma_{q}(\beta +|\vec{n}|+ s)}{\Gamma_{q}(\beta +|\vec{n}|)\Gamma_{q}(s+1)}.
\label{eq-rr-interm}
\end{multline}

Using recursively relation \eqref{eq-lem-meix}  involving the product of~$r$ dif\/ference operators
$\mathcal{M}_{q,n_{1}}^{\alpha_{1}},\ldots,\mathcal{M}_{q,n_{r}}^{\alpha_{r}}$ acting on the function $f(s)=\frac{\Gamma_{q}(\beta +|\vec{n}|+ s)}{\Gamma_{q}(\beta +|\vec{n}|)\Gamma_{q}(s+1)}$, we have
\begin{align}
&q^{\vert \vec{n}\vert}\mathcal{M}_{q,\vec{n}}^{\vec{\alpha}} x(s)\frac{\Gamma_{q}(\beta +|\vec{n}|+ s)}{\Gamma_{q}(\beta +|\vec{n}|)\Gamma_{q}(s+1)}
\nonumber\\
&=q^{1/2}\sum\limits_{i=1}^{r}\prod\limits_{j\neq i}^{r}\frac{\alpha_{i}q^{|\vec{n}|}-\alpha_{j}q^{n_{j}}}{\alpha_{i}q^{n_{i}}-\alpha_{j}q^{n_{j}}}\frac{x(n_{i})\big(\alpha_{i}q^{n_{i}}-1\big)\big(\alpha_{j}q^{|\vec{n}|+\beta +n_{j}}-1\big)}{\prod\limits_{\nu =1}^{r}\big(\alpha_{\nu}q^{|\vec{n}|+\beta }-1\big)}
\nonumber
\\
&\times\prod\limits_{l=1}^{r}\mathcal{M}_{q,n_{l}-\delta_{l,i}}^{\alpha_{l}}\frac{\Gamma_{q}(\beta +|\vec{n}|+ s)}{\Gamma_{q}(\beta +|\vec{n}|)\Gamma_{q}(s+1)}+\left(\prod\limits_{i=1}^{r}\frac{\alpha_{i}q^{|\vec{n}|+\beta +n_{i}}-1}{\alpha_{i}q^{|\vec{n}|+\beta }-1} x(s) -\sum\limits_{i=1}^{r}q^{|\vec{n}|_{i}}x(n_{i})\right.
\nonumber
\\
&\left.\times\frac{\alpha_{i}q^{\sum\limits_{j=i}^{r}n_{j}}-1}{\alpha_{i}q^{|\vec{n}|+\beta }-1}-\big(q-1\big)q^{|\vec{n}|+\beta}\prod\limits_{i=1}^{r}\frac{\alpha_{i}q^{n_{i}}-1}{\alpha_{i}q^{|\vec{n}|+\beta }-1}+\big(q-1\big)\prod\limits_{i=1}^{r}x(n_{i})\sum\limits_{i=1}^{r}\prod\limits_{j=1}^{i}\frac{1}{\alpha_{j}q^{|\vec{n}|+\beta }-1}\right)
\nonumber\\
&\times \mathcal{M}_{q,\vec{n}}^{\vec{\alpha}}\frac{\Gamma_{q}(\beta +|\vec{n}|+ s)}{\Gamma_{q}(\beta +|\vec{n}|)\Gamma_{q}(s+1)}.
\label{eq-rr-interm1}
\end{align}
Hence, by using expressions~\eqref{eq-rr-interm}, \eqref{eq-rr-interm1} one gets
\begin{align*}
&x(s)\mathcal{M}_{q,\vec{n}}^{\vec{\alpha}}\frac{\Gamma_{q}(\beta +|\vec{n}|+ s)}{\Gamma_{q}(\beta +|\vec{n}|)\Gamma_{q}(s+1)}
\nonumber\\
&=q^{|\vec{n}|-1/2}\prod\limits_{i=1}^{r}\frac{\alpha_{i}q^{|\vec{n}|+\beta }-1}{\alpha_{i}q^{|\vec{n}|+\beta +n_{i}}-1}\frac{(\alpha_{k}q^{n_k+1})x(\beta +|\vec{n}|)}{\alpha_{k}q^{|\vec{n}|+\beta+n_k+1}-1}\mathcal{M}_{q,\vec{n} +\vec{e}_{k}}^{\vec{\alpha}}\frac{\Gamma_{q}(\beta +|\vec{n}|+1+ s)}{\Gamma_{q}(\beta +|\vec{n}|+1)\Gamma_{q}(s+1)}
\\
&+\prod\limits_{i=1}^{r}\frac{\alpha_{i}q^{|\vec{n}|+\beta }-1}{\alpha_{i}q^{|\vec{n}|+\beta +n_{i}}-1}\left(\sum\limits_{i=1}^{r}q^{|\vec{n}|_{i}}x(n_{i})\frac{\alpha_{i}q^{\sum\limits_{j=i}^{r}n_{j}}-1}{\alpha_{i}q^{|\vec{n}|+\beta }-1}+\big(q-1\big)q^{|\vec{n}|+\beta}\prod\limits_{i=1}^{r}\frac{\alpha_{i}q^{n_{i}}-1}{\alpha_{i}q^{|\vec{n}|+\beta }-1}\right.
\\
&\left.-\big(q-1\big)\prod\limits_{i=1}^{r}x(n_{i})\sum\limits_{i=1}^{r}\prod\limits_{j=1}^{i}\frac{1}{\alpha_{j}q^{|\vec{n}|+\beta }-1}+q^{|\vec{n}|}\frac{(\alpha_{k}q^{n_k+1})x(\beta +|\vec{n}|)}{1-\alpha_{k}q^{|\vec{n}|+\beta+n_k+1}}\right)
\\
&\times\mathcal{M}_{q,\vec{n}}^{\vec{\alpha}}\frac{\Gamma_{q}(\beta +|\vec{n}|+ s)}{\Gamma_{q}(\beta +|\vec{n}|)\Gamma_{q}(s+1)}-q^{1/2}\sum\limits_{i=1}^{r}\prod\limits_{j\neq i}^{r}\frac{\alpha_{i}q^{|\vec{n}|}-\alpha_{j}q^{n_{j}}}{\alpha_{i}q^{n_{i}}-\alpha_{j}q^{n_{j}}}\frac{x(n_{i})\big(\alpha_{i}q^{n_{i}}-1\big)}{\alpha_{i}q^{|\vec{n}|+\beta +n_{i}}-1}
\\
&\times\prod\limits_{l=1}^{r}\mathcal{M}_{q,n_{l}-\delta_{l,i}}^{\alpha_{l}}\frac{\Gamma_{q}(\beta +|\vec{n}|+ s)}{\Gamma_{q}(\beta +|\vec{n}|)\Gamma_{q}(s+1)}.
\end{align*}

As a result of the above calculations one has
\begin{align*}
&x(s)\mathcal{M}_{q,\vec{n}}^{\vec{\alpha}}\frac{\Gamma_{q}(\beta +|\vec{n}|+ s)}{\Gamma_{q}(\beta +|\vec{n}|)\Gamma_{q}(s+1)}
\nonumber\\
&=q^{|\vec{n}|-1/2}\prod\limits_{i=1}^{r}\frac{\alpha_{i}q^{|\vec{n}|+\beta }-1}{\alpha_{i}q^{|\vec{n}|+\beta +n_{i}}-1}\frac{(\alpha_{k}q^{n_k+1})x(\beta +|\vec{n}|)}{\alpha_{k}q^{|\vec{n}|+\beta+n_k+1}-1}\mathcal{M}_{q,\vec{n} +\vec{e}_{k}}^{\vec{\alpha}}\frac{\Gamma_{q}(\beta +|\vec{n}|+1+ s)}{\Gamma_{q}(\beta +|\vec{n}|+1)\Gamma_{q}(s+1)}
\\
&+b_{\vec{n},k}\mathcal{M}_{q,\vec{n}}^{\vec{\alpha}}\frac{\Gamma_{q}(\beta +|\vec{n}|+ s)}{\Gamma_{q}(\beta +|\vec{n}|)\Gamma_{q}(s+1)}-q^{1/2}\sum\limits_{i=1}^{r}\prod\limits_{j\neq i}^{r}\frac{\alpha_{i}q^{|\vec{n}|}-\alpha_{j}q^{n_{j}}}{\alpha_{i}q^{n_{i}}-\alpha_{j}q^{n_{j}}}\frac{x(n_{i})\big(\alpha_{i}q^{n_{i}}-1\big)}{\alpha_{i}q^{|\vec{n}|+\beta +n_{i}}-1}
\\
&\times\frac{1}{\alpha_{i}q^{|\vec{n}|+\beta +n_{i}-1}-1}\prod\limits_{l=1}^{r}\mathcal{M}_{q,n_{l}-\delta_{l,i}}^{\alpha_{l}}\frac{\Gamma_{q}(\beta +|\vec{n}|-1+ s)}{\Gamma_{q}(\beta +|\vec{n}|-1)\Gamma_{q}(s+1)}.
\end{align*}

Finally, multiplying from the left both sides of the previous expression~by 
$\frac{\Gamma_{q}(\beta)\Gamma_{q}(s+1)}{\Gamma_{q}(\beta +s)}$ and using Rodrigues-type formula~\eqref{RFormula} we
obtain~\eqref{q-Rrelation}, which completes the proof.
\end{proof}

\section{Concluding remarks}\label{conclu}

In closing, we summarize our findings. We have defined a new family of $q$-Meixner multiple orthogonal polynomials of the first kind and obtained their explicit expression in terms of Rodrigues-type formula \eqref{RFormula}. We have shown that these multiple orthogonal polynomials are common eigenfunctions of two different $(r+1)$-order difference operators given in \eqref{q-DEquation} and \eqref{q-Rrelation}. By taking limit $q\rightarrow 1$ one recovers the corresponding algebraic relations for multiple Meixner polynomials of the first kind \cite{arvesu_vanAssche}. The expressions \eqref{RFormula}, \eqref{q-DEquation}, and \eqref{q-Rrelation} transform into \eqref{Rodrigues-multi}, \eqref{opdi-1}, and \eqref{AR}, respectively.

Our algebraic approach for the nearest neighbor recurrence relation \eqref{q-Rrelation} is purely algebraic and it neither require to introduce type I multiple orthogonality \cite{Assche_neighbor} nor an algebraic Riemann-Hilbert approach. Indeed, the $q$-difference operators involved in the Rodrigues-type formula are the base of the discussed approach.

\subsection*{Acknowledgements}
The research of J.~Arves\'u was partially supported by the research grant MTM2012-36732-C03-01 (Ministerio de
Econom\'{\i}a y Competitividad) of Spain.

\end{document}